\newtheorem{theorem}{Theorem}[section]
\newtheorem{lemma}[theorem]{Lemma}
\newtheorem{proposition}[theorem]{Proposition}
\newtheorem*{ack*}{Acknowledgment}
\def\R{{\mathbb R}}
\def\C{{\mathbb C}}
\def\nint{\mathop{\diagup\kern-13.0pt\int}}
\def\Z{{\mathbb Z}}
\def\A{{\mathbb A}}
\def\supp{{\operatorname{supp}}}
\def\bas{\begin{align*}}
\def\eas{\end{align*}}
\def\bi{\begin{itemize}}
\def\ei{\end{itemize}}
\newenvironment{proof}{\noindent {\bf Proof} }{\endprf\par}
\def \endprf{\hfill  {\vrule height6pt width6pt depth0pt}\medskip}
\def\emph#1{{\it #1}}
\begin{document}

\author{Ciprian Demeter}
\address{Department of Mathematics, Indiana University,  Bloomington IN}
\email{demeterc@@indiana.edu}
\author{Shaoming Guo}
\address{Department of Mathematics, Indiana University,  Bloomington IN}
\email{shaoguo@@indiana.edu}

\thanks{The first author is partially supported by the NSF grant DMS-1161752}
\title[Schr\"odinger maximal function]{Schr\"odinger maximal function estimates via the pseudoconformal transformation}

\begin{abstract}
We present an alternative way to recover the recent result from \cite{LR} using the pseudoconformal transformation.
\end{abstract}
\maketitle

\section{Introduction}
Recall that the solution of the  Schr\"odinger equation
\begin{equation}
\label{e2}
i\partial_t u(x,t)+\Delta u(x,t)=0,\;x\in\R^n,\;t\ge 0
\end{equation}
with initial data $u_0\in L^2(\R^n)$ is given by
$$u(x,t)=e^{it\Delta}u_0(x)=\int_{\R^n}\widehat{u}_0(\xi)e^{2\pi ix\cdot\xi-4\pi^2it|\xi|^2}d\xi.$$
A fundamental open question for $n\ge 2$ is identifying the smallest Sobolev index $s>0$ for which
$$\lim_{t\to 0}u(x,t)=u_0(x)\; a.e., \text{ for each }u_0\in H^s(\R^n).$$

The main goal of this note is to give an alternative argument for the following recent result of Luc\`a and Rogers, which proves a lower bound on the Sobolev regularity index $s$.
\begin{theorem}\label{main}
Let $n\ge 2$ and $s<\frac{n}{2(n+2)}$. Then there exist $R_k\to\infty$ and $f_k\in L^2(\R^n)$ with $\widehat{f_k}$ supported in the annulus $|\xi|\sim R_k$ such that
\begin{equation}
\label{e1}
\lim_{k\to\infty}\frac{\|\sup_{0<t\lesssim 1}|e^{it\Delta}f_k(x)|\|_{L^2(B(0,1))}}{R_k^{s}\|f_k\|_{L^2(\R^n)}}=\infty.
\end{equation}
\end{theorem}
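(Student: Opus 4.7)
The plan is to transport the problem to a dual picture via the pseudoconformal transformation
$$
(Pu)(x,t) \;=\; \frac{1}{(it)^{n/2}}\, e^{i|x|^2/(4t)} \,\overline{u\bigl(x/t,\,-1/t\bigr)},
$$
which maps solutions of (\ref{e2}) to solutions of (\ref{e2}) and interchanges small positive times $t\in(0,1]$ with large negative times $-1/t\in(-\infty,-1]$. Composing $P$ with a parabolic rescaling, I would convert the short-time maximal estimate for $f$ with $\widehat f$ supported in $|\xi|\sim R$ into a long-time maximal estimate, over $t\in[1,R^{2}]$ and $x\in B(0,1)$, for a spatially localized initial datum $g$ (supported at scale $\sim 1$ after the rescaling). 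Thus (\ref{e1}) becomes equivalent to exhibiting $g_k\in L^2(\R^n)$ and scales $T_k\to\infty$ such that
$$
\bigl\|\sup_{1\le t\le T_k}\bigl|e^{it\Delta}g_k\bigr|\bigr\|_{L^2(B(0,1))} \;\gg\; T_k^{s/2}\,\|g_k\|_{L^2}.
$$

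The second step is to build such $g_k$ by a Knapp/Bourgain-style focusing argument. One takes $g_k$ to be a superposition of modulated bumps
$$
g_k(x) \;=\; \sum_{j} e^{i \xi_j\cdot x}\,\phi\!\bigl(N_k (x-y_j)\bigr),
$$
with $\{\xi_j\}$ and $\{y_j\}$ a suitable lattice configuration and $N_k$ a parameter to be optimized. Each wave packet travels with group velocity $2\xi_j$ and refocuses near a space-time grid $\{(x_\ell,t_\ell)\}\subset B(0,1)\times[1,T_k]$. By arranging for many packets to meet simultaneously at each $(x_\ell,t_\ell)$, one obtains pointwise values of the evolution that saturate the trivial dispersive bound. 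Optimizing the number of refocusings against the $L^2$ norm of $g_k$ and the available volume in $B(0,1)\times[1,T_k]$ yields the critical exponent $s=\frac{n}{2(n+2)}$.

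Finally, pulling back under $P$ produces the desired $f_k$, with $\widehat{f_k}$ essentially supported in the annulus $|\xi|\sim R_k$ for $R_k\sim T_k^{1/2}$, and (\ref{e1}) follows from the dictionary set up in the first step. The principal difficulty I expect is verifying that the lattice/focusing construction in the dual picture is compatible with the chirp factor $e^{i|x|^2/(4t)}$ introduced by $P$, and with the annular frequency-support constraint on $f_k$: this compatibility restricts the admissible range of the parameters $(\xi_j,y_j,N_k,T_k)$ and is precisely what pins down the sharp exponent $\frac{n}{2(n+2)}$. Checking that, after all Jacobian and modulation factors are accounted for, the transformed sequence indeed lives in a single annulus $|\xi|\sim R_k$ (rather than in a more complicated set) is the main computational hurdle of the approach.
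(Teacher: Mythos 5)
Your high-level architecture — dualize via the pseudoconformal transformation, build a focusing datum in the dual picture, pull back — is the same as the paper's, and your reduction in the first step is essentially Theorem \ref{main1} up to a choice of parabolic normalization. But the outline stops exactly where the mathematics begins. You never pin down the construction (in the paper it is precisely the Luc\`a--Rogers datum: $\widehat{u_0}(\xi)=\sum_{\xi'\in\Omega}\theta(\xi-\xi')$ with $\Omega=(R^{1-\sigma}\Z^n)\cap B(0,\epsilon_1 R)$, i.e.\ a \emph{single} bump at the origin modulated by a lattice exponential sum — all your $y_j$ collapse to one point and $N_k=1$), and, more seriously, the step you call ``optimizing the number of refocusings against the available volume'' is the entire content of the argument and you offer no mechanism for it. Before the pseudoconformal transformation, $u$ is large only on a set $X\times T$ with $|X|\sim R^{-\sigma n}$, and a direct computation gives $\|\sup_t|u|\|_{L^2}/\|u_0\|_2\sim 1$, i.e.\ \emph{no} gain whatsoever. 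The gain comes entirely from showing that the set $Y=\frac{XR}{T}$ on which the transformed solution $v$ is large satisfies $|Y\cap B(0,R)|\gtrsim R^n$ — that dividing the sparse set $X$ by the time lattice $T$ amplifies it to full density. This is a genuinely quantitative multidimensional Dirichlet/pigeonhole statement (Lemma \ref{7} and the proposition that follows it), and nothing in your sketch produces or even flags it.

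You also misidentify where the difficulty lies. The chirp $e^{i|x|^2/(4t)}$ is unimodular and disappears the instant one takes absolute values, so it imposes no compatibility constraint on the lattice. Likewise the annular support of $\widehat{f_k}$ is automatic: by \eqref{25}, $\widehat{v_0}(y)=Cu_0(4\pi y)$, and since $u_0$ is supported in a fixed unit-scale annulus, $\widehat{v_0}$ is too, and the final parabolic rescaling places $\widehat{f_k}$ in $|\xi|\sim R_k$ with no further work. Neither of these ``pins down'' the exponent. What pins down $s<\frac{n}{2(n+2)}$ is the requirement $\sigma<\frac1{n+2}$, which is exactly the threshold at which the radius $\epsilon_2 p R^{\sigma}/R$ in the Dirichlet argument can be made $\ge N^{-1/n}$ with $N\sim R^{1-2\sigma}$, so that Lemma \ref{7} applies and $Y$ reaches full measure. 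Without that number-theoretic amplification step your proposal is an outline of the correct strategy, not a proof.
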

\medskip

We use the pseudoconformal symmetry, according to which, if $u(x,t)$ solves \eqref{e2} then so does
$$v(x,t)=\frac1{t^{n/2}}\bar{u}(\frac{x}{t},\frac1{t})e^{i\frac{|x|^2}{4t}}.$$
Moreover, the initial data of the two solutions will have comparable $L^2$ norms. See the Appendix.
We will start with a solution $u$ (the same as the one in \cite{LR}) that is big on a cartesian set $X\times T$ of $(x,t)$. The set $X$ will be a small neighborhood of a rescaled copy of $\Z^n$ inside $[-1,1]^n$, while $T$ will be a discrete lattice inside $t\sim 1$. The measure of $X$ will be significantly smaller than 1, of order $R_k^{-\alpha_n}$, for some $\alpha_n>0$.  The property that our construction exploits is that the set $Y=\frac{X}{T}$ can be made much larger than $X$, in fact it can be made to have measure comparable to 1. Note that the new solution $v$ will now be big for each $x\in Y$ (for some $t$ depending on $x$). This will be enough to prove Theorem \ref{main}.
\medskip

Let us compare our approach with other recent ones. Luc\`a and Rogers \cite{LR} use the Galilean symmetry, according to which
if $u(x,t)$ solves \eqref{e2} then so does
$$v(x,t)=u(x-t\theta,t)e^{it\frac{|\theta|^2}{4}}e^{i\frac{x\cdot\theta}2}$$
for arbitrary $\theta\in\R^n$. Moreover, the initial data of the two solutions will have comparable $L^2$ norms. As mentioned before, they start with the same $u$, and thus have the same $X,T$. Their observation is that, for appropriate $\theta$, the set $Y=X-\theta T$ will have measure comparable to 1.
\medskip

Bourgain \cite{Bo} constructs a solution $u$ which has two attributes. On the one hand, it is big on a cartesian product $X\times \{0\}$. So $T=\{0\}$. The second property of $u$ is that it is very symmetrical, almost invariant under a certain space-time translation. More precisely, for an appropriate $\nu\in\R^{n+1}$
\begin{equation}
\label{e3}
u(x,t)\approx u((x,t)+s\nu)
\end{equation}
will hold for all $x\in B(0,1)$ and all $t,s\sim 1$. The original small set $X$ where $u$ was large gets amplified from the fact that the $x$ projection of the set $$Y=(X\times \{0\})+[\frac1{10},10]\nu$$ has measure comparable to $1$.
\medskip

In both our example and the one from \cite{LR}, the Fourier transform $\widehat{u}_0$ of the initial data is essentially the characteristic function of a small neighborhood of a rescaled (and truncated) copy of $\Z^n$. In Bourgain's construction, the mass lives on a small portion of this set, where lattice points are restricted to a sphere. The key is that the lift of this sphere to the paraboloid $(\xi,|\xi|^2)$ is a collection of points that live in a hyperplane $H\subset \R^{n+1}$. The existence of a nonzero vector $\nu\in H^{\perp}$ is what makes the remarkable symmetry \eqref{e3} possible.
\medskip

In terms of the actual mathematics that is involved in proving that the enhanced set $Y$ has measure comparable to 1, the three methods described above are at least superficially different. Luc\`a and Rogers derive a quantitative version of the ergodic theorem involving the Funk-Hecke theorem. Bourgain uses a bit of Fourier analysis but his argument also has diophantine flavor. Our argument elaborates on  a quantitative version of the multidimensional Dirichlet principle, which in its simplest form can be stated as follows.
\begin{lemma}
\label{8}
Given $y_1,\ldots,y_n\in [0,1]$ and a real number $N\ge 1$, there is $1\le p\le N+2$ such that
$$\max_{1\le i\le n}\|py_i\|\le \frac1{N^{1/n}}.$$
\end{lemma}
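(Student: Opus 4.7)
The plan is to invoke the classical pigeonhole argument behind Dirichlet's simultaneous approximation theorem, applied on the torus $[0,1)^n$. First, I would set $K=\lceil N^{1/n}\rceil$, so that $1/K\le N^{-1/n}$, and partition $[0,1)^n$ into $K^n$ half-open axis-parallel cubes of side length $1/K$.

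Next, I would consider the $K^n+1$ points
$$P_p:=(\{py_1\},\ldots,\{py_n\})\in[0,1)^n,\qquad p=0,1,\ldots,K^n,$$
where $\{\cdot\}$ denotes the fractional part. By pigeonhole there must exist two indices $0\le p_1<p_2\le K^n$ with $P_{p_1}$ and $P_{p_2}$ lying in the same small cube. Setting $p:=p_2-p_1$ gives $1\le p\le K^n$, and since the fractional part is additive modulo $1$, one concludes that $\|py_i\|\le 1/K\le N^{-1/n}$ for each coordinate $i$.

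Matching the precise bound $p\le N+2$ in the statement is the only slightly delicate point, since $\lceil N^{1/n}\rceil^n$ can exceed $N$ when $N^{1/n}$ is not an integer. When $N$ is an $n$-th power this is immediate; in general one simply adjusts the pigeonhole count (for instance, replacing the uniform cube partition by at most $\lfloor N\rfloor+1$ half-open sets that still have each coordinate projection of length $\le N^{-1/n}$, or equivalently reducing the number of test values of $p$ used), which keeps the number of pigeonholes at most $N+1$ while preserving the diameter bound. I do not anticipate any real obstacle: the pigeonhole principle is the entire content of the argument, and the rest is bookkeeping of constants.
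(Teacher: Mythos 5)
Your main pigeonhole argument is the standard one and it does prove a statement of the right form, but the cube partition does not give the bound $p\le N+2$: it gives $p\le\lceil N^{1/n}\rceil^n$, which for $n\ge 2$ can be much larger than $N+2$ (for instance $n=2$, $N=102.01$ gives $\lceil N^{1/2}\rceil^2=121$ versus $N+2\approx 104$). You flag this, but the proposed repair does not work. It is impossible to cover $[0,1]^n$ by $\lfloor N\rfloor+1$ sets each of whose coordinate projections have length $\le N^{-1/n}$: for $n=2$ and $N$ slightly larger than $4$ (so $N^{-1/2}$ slightly less than $1/2$), the nine points of $\{0,1/2,1\}^2$ are pairwise at $\ell^\infty$ distance $\ge 1/2$, so any such cover needs at least $9>\lfloor N\rfloor+1$ pieces. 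Reducing the number of test values of $p$ does not help either, since then the pigeonhole fails.

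The clean way to get the stated constant is to move from a partition argument to a packing argument on the torus $\T^n=(\R/\Z)^n$, which is what the quantity $\|\cdot\|$ is really measuring. Take $M=\lceil N\rceil$ and consider the $M+1$ points $P_p=(\{py_1\},\dots,\{py_n\})\in\T^n$, $p=0,\dots,M$. If every pair were at $\T^n$-$\ell^\infty$ distance $>N^{-1/n}$, then the open $\ell^\infty$-balls of radius $N^{-1/n}/2$ about these points would be pairwise disjoint, of total measure $(M+1)N^{-1}>1$, a contradiction. Hence some $P_{p_1},P_{p_2}$ with $p_1<p_2$ are within $N^{-1/n}$, and $p=p_2-p_1\le M=\lceil N\rceil\le N+1$ does the job, since $\|(p_2-p_1)y_i\|$ equals the $\T$-distance between $\{p_1 y_i\}$ and $\{p_2 y_i\}$. (The paper does not give a proof beyond the phrase "immediate application of pigeonholing," so one cannot compare routes directly; but the partition version you wrote does not reach the stated constant, while this volume pigeonhole does.) Alternatively, since only the order of magnitude matters downstream, you could keep your cube argument and restate the lemma with the bound $p\le\lceil N^{1/n}\rceil^n$, adjusting the numerical constants in Lemma 3.2 and the subsequent Proposition accordingly.
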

Here and in the following, $\|x\|$ will denote the distance of $x$ to $\Z$. The proof of this lemma is an immediate application of pigeonholing.
\medskip

It is hard to conjecture what the optimal $s$ in Theorem \ref{main} should be. The authors feel that the likeliest possibility is $s=\frac{n}{2(n+1)}$. If one runs a multilinear type Bourgain--Guth  argument for this problem (as was done in \cite{Bo}), the $n+1$ linear term has a favorable estimate consistent with this value of $s$. Another interesting question is whether the optimal $s$ is the same  for a larger class of curved hyper-surfaces $(\xi,\varphi(\xi))$ generalizing the paraboloid $(\xi,|\xi|^2)$. It is worth mentioning that Bourgain exhibits  a surface
$$\varphi(\xi)=\langle A\xi,\xi\rangle+O(|\xi|^3)$$
with $A$ positive definite, for which a stronger result is proved: Theorem \ref{main} will hold even with $s<\frac{n-1}{2n}$ ($n\ge 3$) and $s<\frac{5}{16}$ ($n=2$).

\medskip

\begin{ack*}
The authors thank J. Bennett, R. Luc\`a, K. Rogers and A. Vargas for a few interesting discussions on this topic.
\end{ack*}

\section{The main construction and the proof of Theorem \ref{main}}
Via rescaling, Theorem \ref{main} will follow from the following result.
\begin{theorem}\label{main1}
Let $n\ge 2$ and $s<\frac{n}{2(n+2)}$. Then there exist $R_k\to\infty$ and $v_k\in L^2(\R^n)$ with $\widehat{v_k}$ supported in the annulus $|\xi|\sim 1$ such that
\begin{equation}
\label{e25}
\lim_{k\to\infty}\frac{\|\sup_{0<t\lesssim R_k}|e^{it\Delta}v_k(x)|\|_{L^2(B(0,R_k))}}{R_k^{s}\|v_k\|_{L^2(\R^n)}}=\infty.
\end{equation}
\end{theorem}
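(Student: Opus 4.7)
The plan is to execute the strategy outlined in the introduction. Choose $q=q_k:=\lfloor R_k^{(n+1)/(n+2)}\rfloor$ and $q_1:=\lceil R_kq^{-1/n}\rceil$, so the key inequality $q^{-1/n}/q_1\le R_k^{-1}$ holds and $q_1<q/2$ for $R_k$ large. Let $\phi\in C_c^\infty(\R^n)$ be a bump supported in a suitable spatial annulus (so that after the pseudoconformal the Fourier support becomes the annulus $|\xi|\sim 1$), and set
\[
u_0(x):=\phi(x)\sum_{\eta\in\Z^n,\,|\eta|\sim R_k}e^{2\pi ix\cdot\eta},
\]
so that $\widehat{u_0}$ is essentially supported in the annulus $\{|\xi|\sim R_k\}$ and $\|u_0\|_2^2\sim R_k^n$. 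A standard Gauss-sum computation on $(\Z/q\Z)^n$, exactly as in \cite{LR}, produces the lower bound $|u(x,t)|\gtrsim R_k^nq^{-n/2}$ on a Cartesian set $X\times T$, where $X$ is the $cR_k^{-1}$-thickening of the rescaled lattice $q^{-1}\Z^n$ inside the support of $\phi$ (so $|X|\sim q^nR_k^{-n}=R_k^{-n/(n+2)}$) and $T=\{p/(4\pi^2q):p\in P\}$ for an arithmetic subset $P\subset(q/2,q]\cap\Z$ on which the Gauss sums are non-degenerate.

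Define the pseudoconformal image
\[
v(x,t):=t^{-n/2}\overline{u(x/t,1/t)}e^{i|x|^2/(4t)}.
\]
By the Appendix, $v=e^{it\Delta}v_k$ for some $v_k\in L^2(\R^n)$ with $\|v_k\|_2\sim\|u_0\|_2\sim R_k^{n/2}$, and the spatial annular support of $u_0$ forces $\widehat{v_k}$ to live in the annulus $\{|\xi|\sim 1\}$, as demanded. The bound on $|u|$ propagates to $|v(x,t)|\gtrsim R_k^nq^{-n/2}$ at every $(x,t)$ of the form $t=q/p$, $x\in(q/p)X$ with $p\in P$; since $p\in(q/2,q]$ this places $t\in[1,2)$ and $|x|=O(1)$, both comfortably inside the ranges of Theorem \ref{main1}.

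The decisive step is proving $|Y|\gtrsim 1$, where $Y$ is the spatial projection of the enhanced concentration set just described. Fix $y$ in a suitable reference annulus. Lemma \ref{8} with $N=q$ yields $p(y)\le q+2$ and $m(y)\in\Z^n$ with $|y-m(y)/p(y)|_\infty\le q^{-1/n}/p(y)$. If $p(y)>q/2\,(\ge q_1)$, then $|y-m(y)/p(y)|_\infty<q^{-1/n}/q_1\le R_k^{-1}$, and $m(y)/q$ falls into the support region of $\phi$, so $y\in Y$. The exceptional set $\{y:p(y)\le q/2\}$ is contained in $\bigcup_{p\le q/2}\{y:\|py\|_\infty\le q^{-1/n}\}$; each slice has measure $\sim q^{-1}$, so the union has measure $\lesssim (q/2)\cdot q^{-1}=1/2$. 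Thus at least half of the reference annulus lies in $Y$, giving $|Y|\gtrsim 1$.

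Putting it together,
\[
\|\sup_{0<t\lesssim R_k}|e^{it\Delta}v_k|\|_{L^2(B(0,R_k))}^2\gtrsim|Y|\cdot(R_k^nq^{-n/2})^2\sim R_k^{2n}q^{-n},
\]
so the quotient in \eqref{e25} is $\gtrsim R_k^{n/2-s}q^{-n/2}=R_k^{n/(2(n+2))-s}$, which diverges whenever $s<n/(2(n+2))$. The main obstacle we anticipate is the very first step -- rigorously verifying $|u|\gtrsim R_k^nq^{-n/2}$ on $X\times T$ for the problem on $\R^n$. On the torus this is a clean Gauss-sum identity; on $\R^n$ the spatial cutoff $\phi$ disrupts the exact identity and a careful stationary-phase analysis is needed to confirm that the Gauss sums of magnitude $q^{n/2}$ are actually realized at the points of $X\times T$. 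This is the principal technical input to be imported from \cite{LR}; modulo it, our remaining contribution -- the pseudoconformal transport combined with the Dirichlet-type measure estimate on $Y$ -- amounts to bookkeeping.
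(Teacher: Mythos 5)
Your overall strategy matches the paper exactly: build a solution $u$ that is large on a Cartesian set $X\times T$, apply the pseudoconformal transformation to get $v$, and use a Dirichlet-type pigeonhole (the paper's Lemma~\ref{8} and Lemma~\ref{7}) to show that the enhanced set $Y=X/T$ has measure comparable to $1$ (after the appropriate rescaling, $\gtrsim R^n$ in the paper). Your treatment of the Dirichlet estimate -- discarding the contribution of $p\le q/2$ by a crude measure count -- is essentially the proof of Lemma~\ref{7}, and your exponent bookkeeping gives the same threshold $s<\frac{n}{2(n+2)}$. So the ``transport'' half of the argument is sound.

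The gap is in the very first step, the lower bound $|u(x,t)|\gtrsim R_k^nq^{-n/2}$ on $X\times T$, and you cannot import it from \cite{LR} as you claim. Two problems. First, the mechanism: \cite{LR} (and Proposition~\ref{40} here) do not use Gauss sums. The initial data $\widehat{u_0}$ is a sparse sub-lattice $\Omega=R^{1-\sigma}\Z^n\cap B(0,\epsilon_1 R)$ of only $\sim R^{\sigma n}$ bumps, and the lower bound comes from \emph{exact phase alignment}: for $(x,t)\in X\times T$ the phase $x\cdot\xi'-\tfrac{t}{R}|\xi'|^2$ lies in $\Z+O(\epsilon_2)$ for every $\xi'\in\Omega$, so all $|\Omega|\sim R^{\sigma n}$ terms add constructively with no square-root loss. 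Second, and more seriously, your time scale is wrong. You take $\widehat{u_0}$ supported at $|\xi|\sim R_k$ and evaluate $u$ at times $t\in T\sim 1/(4\pi^2)$, i.e.\ at times of order $1$. But a wave packet at frequency $\eta$ with $|\eta|\sim R_k$ has group velocity $\sim R_k$, so after time $\sim 1$ it has moved a distance $\sim R_k$; writing $u(x,t)=\sum_\eta e^{2\pi i(x\cdot\eta-2\pi t|\eta|^2)}\bigl(e^{it\Delta}\phi\bigr)(x-4\pi t\eta)$ shows that every term vanishes for $x$ in the (unit-size) support of $\phi$ once $t\sim 1$. So $u\approx 0$ on your set $X\times T$, and the claimed lower bound cannot hold; the Talbot/Gauss-sum refocusing you have in mind is a torus phenomenon that is destroyed on $\R^n$ at this time scale. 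The paper avoids this by working with the effective time $\tfrac{t}{2\pi R}\sim 1/R$ (so the packets move only $O(1)$ and stay on the support of $\theta^\vee$), and by choosing the lattice $\Omega$ and the grids $X$, $T$ so that the phases are integers at exactly these points. After the pseudoconformal, this puts the $v$-concentration at times $t\sim R$, spatial scale $\sim R$, which is what produces the factor $|Y|\gtrsim R^n$ in Proposition~\ref{11} (rather than $|Y|\gtrsim 1$ as in your write-up); the final exponents come out the same but the scaling of the intermediate objects is different. To repair your argument, replace the dense sum over $|\eta|\sim R_k$ by the sparse $\Omega$ above, take $\sigma$ strictly less than $\tfrac1{n+2}$ (rather than the endpoint implicit in $q\sim R_k^{(n+1)/(n+2)}$, which leaves no room for the constant $\epsilon_2$ in the Dirichlet step), and evaluate $u$ at times $\sim 1/R$; this is precisely Propositions~\ref{40}, \ref{11} and the proposition following Lemma~\ref{7}.
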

We will prove this in the end of the section, using some elementary number theoretical results derived in Section \ref{50}.
\medskip

For $0<u<v$ define the annuli
$$\A_{u,v}=\{x\in\R^n:\;u<|x|<v\}.$$
Fix $\sigma<\frac1{n+2}$. Fix a Schwartz function $\theta$ on $\R^n$ whose Fourier transform is supported inside $\A_{4^{-n-3},4\sqrt{n}}$ and equals 1 on $\A_{4^{-n-2},2\sqrt{n}}$. The next three lemmas are used to align the phases of an exponential sum so that the absolute value of the sum is comparable to the number of exponentials in the sum.
\begin{lemma}
\label{4}
There exists $\epsilon_1>0$ so that for each $R$ large enough, the following holds:

For each $x\in \A_{4^{-n-2},2\sqrt{n}}$, each $t\in (0,1)$ and each $\xi'\in\R^n$ with $|\xi'|\le \epsilon_1R$ we have
$$|\int e^{2\pi i[(x-\frac{2t\xi'}{R})\cdot \xi-\frac{t}{R}|\xi|^2]}\theta(\xi)d\xi-1|<\frac12.$$
\end{lemma}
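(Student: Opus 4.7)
The plan is to prove the estimate as a direct perturbation result around $t=0$. First I would observe that setting $t=0$ in the integral formally yields
\[
\int\theta(\xi)\,e^{2\pi ix\cdot\xi}\,d\xi=\widehat\theta(-x),
\]
and, because the annulus $\A_{4^{-n-2},2\sqrt{n}}$ is symmetric under $x\mapsto -x$, the hypothesis $\widehat\theta\equiv 1$ on it forces this value to equal $1$ for every $x\in\A_{4^{-n-2},2\sqrt{n}}$. So the lemma is really the quantitative statement that turning on the phase $\Phi_{t,\xi'}(\xi):=-\tfrac{2t\xi'}{R}\cdot\xi-\tfrac{t}{R}|\xi|^2$ perturbs the integral by less than $1/2$ for all admissible $t$ and $\xi'$.

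The central identity is
\[
\int e^{2\pi i[(x-\frac{2t\xi'}{R})\cdot\xi-\frac{t}{R}|\xi|^2]}\theta(\xi)\,d\xi-1
=\int\theta(\xi)\,e^{2\pi ix\cdot\xi}\bigl(e^{2\pi i\Phi_{t,\xi'}(\xi)}-1\bigr)d\xi,
\]
and I would split the $\xi$-integral at a large radius $M$ chosen first. On the tail $\{|\xi|>M\}$ the trivial bound $|e^{2\pi i\Phi}-1|\le 2$ together with the Schwartz decay of $\theta$ guarantees that the tail contribution is below $1/4$ provided $M=M(\theta,n)$ is taken sufficiently large. On the main piece $\{|\xi|\le M\}$, the elementary inequality $|e^{i\alpha}-1|\le|\alpha|$ combined with the uniform bound
\[
|\Phi_{t,\xi'}(\xi)|\le\frac{2t|\xi'||\xi|}{R}+\frac{t|\xi|^2}{R}\le 2\epsilon_1 M+\frac{M^2}{R}\quad\text{for }t\in(0,1),\ |\xi|\le M,\ |\xi'|\le\epsilon_1 R,
\]
yields the estimate $2\pi\bigl(2\epsilon_1 M+M^2/R\bigr)\|\theta\|_{L^1}$ for the inner part.

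Finally I would fix the parameters in the correct order. With $M$ determined from the tail bound, choose $\epsilon_1=\epsilon_1(\theta,n)$ small enough that $4\pi\epsilon_1 M\|\theta\|_{L^1}<\tfrac{1}{8}$, and then impose that $R$ be large enough that $2\pi M^2\|\theta\|_{L^1}/R<\tfrac{1}{8}$. Summing the three contributions yields a total error strictly below $1/2$, which is the desired estimate. There is no substantive obstacle here: the argument is essentially a quantitative form of dominated convergence as $t/R$ and $t\xi'/R$ tend to $0$, and the only point that deserves attention is the order in which $M$, $\epsilon_1$, and $R$ are selected so that each term of the decomposition can be controlled independently.
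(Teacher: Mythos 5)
Your proof is correct and follows essentially the same route as the paper: isolate the $t=0$ value using the normalization of $\widehat\theta$ on the annulus, split the error integral at a large radius $M$ to control the Schwartz tail, bound the inner piece via $|e^{i\alpha}-1|\le|\alpha|$, and then fix $M$, $\epsilon_1$, and $R$ in that order. The only cosmetic difference is that you make the symmetry of the annulus explicit when identifying $\widehat\theta(-x)=1$, a point the paper leaves implicit.
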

\begin{proof}
Let $$\psi(\xi)=-2\pi [\frac{2t\xi'}{R}\cdot \xi+\frac{t}{R}|\xi|^2].$$
Use the fact that $$\int e^{2\pi ix\cdot \xi}\theta(\xi)d\xi=1.$$
Then estimate $$|\int  e^{2\pi ix\cdot \xi}\theta(\xi)[e^{i\psi(\xi)}-1]d\xi|\le 2\int_{|\xi|>C}|\theta(\xi)|d\xi+2\sup_{|\xi|\le C}|\psi(\xi)|\int_{\R^n}|\theta|.$$
Choose first $C$ so that $$\int_{|\xi|>C}|\theta(\xi)|d\xi<\frac14.$$
Then note that $$\sup_{|\xi|\le C}|\psi(\xi)|\le 2\pi(\frac{C^2}{R}+2\epsilon_1C).$$
Choose $\epsilon_1$ so small that $$4\pi(\frac{C^2}{R}+2\epsilon_1C)\int_{\R^n}|\theta|<\frac14.$$
for all $R$ large enough.
\end{proof}
The following lemma is rather trivial.
\begin{lemma}
\label{5}
Let $\Omega$ be a finite set.
Consider $a_{\xi'},b_{\xi'}\in\C$ for $\xi'\in\Omega$ such that
$$\max|a_{\xi'}-1|\le \delta_1$$
$$\max|b_{\xi'}-1|\le \delta_2.$$
Then
$$|\sum_{\xi'\in \Omega}a_{\xi'}b_{\xi'}-|\Omega||\le |\Omega|(\delta_1\max|b_{\xi'}|+\delta_2\max|a_{\xi'}|+\delta_1\delta_2).$$
\end{lemma}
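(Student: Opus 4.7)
The plan is to prove the bound termwise and then sum. That is, I would first establish the pointwise estimate
\[
|a_{\xi'} b_{\xi'} - 1| \le \delta_1 \max_{\xi' \in \Omega}|b_{\xi'}| + \delta_2 \max_{\xi' \in \Omega}|a_{\xi'}| + \delta_1 \delta_2
\]
for every $\xi' \in \Omega$, and then sum over the $|\Omega|$ elements of $\Omega$, using the triangle inequality to pass from $|\sum_{\xi'}(a_{\xi'}b_{\xi'}-1)| = |\sum_{\xi'}a_{\xi'}b_{\xi'} - |\Omega||$ to $\sum_{\xi'}|a_{\xi'}b_{\xi'}-1|$. The factor $|\Omega|$ in the stated bound then appears simply as the count of terms.

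For the pointwise estimate, my plan is to use the algebraic identity
\[
ab - 1 = (a-1)\, b + a\, (b-1) - (a-1)(b-1),
\]
which one checks by direct expansion: the right-hand side equals $(ab-b) + (ab-a) - (ab-a-b+1) = ab-1$. Applying the triangle inequality together with the hypotheses $|a_{\xi'}-1|\le \delta_1$ and $|b_{\xi'}-1|\le \delta_2$, and bounding $|a_{\xi'}|$, $|b_{\xi'}|$ by their respective maxima over $\Omega$, yields exactly the desired termwise bound.

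There is no real obstacle here; the argument is purely algebraic and the authors themselves label the lemma as \emph{rather trivial}. The only micro-choice is which identity to use when splitting $ab-1$: the one above is chosen precisely so that the resulting bound matches the asymmetric form stated in the lemma, with cross terms $\delta_1 \max|b_{\xi'}|$ and $\delta_2 \max|a_{\xi'}|$, rather than the cleaner but differently shaped constants $\delta_1, \delta_2$ that one would obtain from the alternative identity $ab-1 = (a-1)+(b-1)+(a-1)(b-1)$.
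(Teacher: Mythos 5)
Your proof is correct: the identity $ab-1=(a-1)b+a(b-1)-(a-1)(b-1)$ checks out, the termwise bound follows immediately from the triangle inequality and the hypotheses, and summing over the $|\Omega|$ terms gives the stated estimate. The paper itself gives no proof of this lemma (it simply calls it ``rather trivial''), so there is no argument to compare against; your choice of decomposition is a natural one that reproduces the exact constants in the statement.
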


For $\epsilon_2>0$ small enough (depending only on $\theta$, as revealed in the proof of Proposition \ref{40}), define
$$X:=(R^{\sigma-1}\Z^n+B(0,\frac{\epsilon_2}R))\cap \A_{4^{-n-2},2\sqrt{n}}$$
$$T:=(R^{2\sigma-1}\Z)\cap (4^{-n-1},1),$$
and
$$\Omega:=(R^{1-\sigma}\Z^n)\cap B(0,\epsilon_1R).$$
Define also the Fourier transform of the initial data
$$\widehat{u_0}(\xi)=\sum_{\xi'\in\Omega}\theta(\xi-\xi').$$
Note that
\begin{equation}
\label{24}
\|u_0\|_2\sim R^{\frac{\sigma n}{2}}.
\end{equation}
and
\begin{equation}
\label{41}
\supp\; u_0\subset \A_{4^{-n-3},4\sqrt{n}}.
\end{equation}
The following is essentially proved in (3.2) from \cite{LR}.
\begin{proposition}
\label{40}
We have the following lower bound for each $(x,t)\in X\times T$
\begin{equation}
\label{20}
|e^{i\frac{t}{2\pi R}\Delta}u_0(x)|\gtrsim R^{\sigma n}.
\end{equation}
\end{proposition}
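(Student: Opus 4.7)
The plan is to compute $e^{i\frac{t}{2\pi R}\Delta}u_0(x)$ by writing it as a sum of oscillatory integrals over the frequency bumps around each $\xi'\in\Omega$, and then show that the phases align for $(x,t)\in X\times T$, so that Lemma \ref{5} upgrades the sum to $\sim|\Omega|\sim R^{n\sigma}$.

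Concretely, Fourier inversion gives
$$e^{i\frac{t}{2\pi R}\Delta}u_0(x)=\sum_{\xi'\in\Omega}\int\theta(\xi-\xi')e^{2\pi ix\cdot\xi-\frac{2\pi it}{R}|\xi|^2}d\xi.$$
Changing variables $\xi=\xi'+\eta$ and expanding $|\xi'+\eta|^2=|\xi'|^2+2\xi'\cdot\eta+|\eta|^2$ separates the $\xi'$- and $\eta$-dependence:
$$e^{i\frac{t}{2\pi R}\Delta}u_0(x)=\sum_{\xi'\in\Omega}\underbrace{e^{2\pi ix\cdot\xi'-\frac{2\pi it}{R}|\xi'|^2}}_{=:a_{\xi'}}\cdot\underbrace{\int\theta(\eta)e^{2\pi i[(x-\frac{2t\xi'}{R})\cdot\eta-\frac{t}{R}|\eta|^2]}d\eta}_{=:b_{\xi'}}.$$
Since $x\in\A_{4^{-n-2},2\sqrt n}$, $t\in(0,1)$, and $|\xi'|\le\epsilon_1 R$, Lemma \ref{4} gives $|b_{\xi'}-1|\le 1/2$ for every $\xi'\in\Omega$.

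The crux is showing $a_{\xi'}$ is also close to $1$ for $(x,t)\in X\times T$. Write $\xi'=R^{1-\sigma}m$ with $m\in\Z^n$, $x=R^{\sigma-1}k+y$ with $k\in\Z^n$ and $|y|<\epsilon_2/R$, and $t=R^{2\sigma-1}\ell$ with $\ell\in\Z$. Then
$$x\cdot\xi'=k\cdot m+R^{1-\sigma}y\cdot m\in\Z+O(\epsilon_1\epsilon_2),$$
because $|m|\lesssim\epsilon_1 R^\sigma$ forces $|R^{1-\sigma}y\cdot m|\lesssim\epsilon_1\epsilon_2$. More strikingly, the quadratic phase
$$\frac{t}{R}|\xi'|^2=\frac{R^{2\sigma-1}\ell\cdot R^{2(1-\sigma)}|m|^2}{R}=\ell|m|^2\in\Z$$
is \emph{exactly} an integer by the arithmetic compatibility of the three lattices defining $X$, $T$, and $\Omega$. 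Hence $|a_{\xi'}-1|\lesssim\epsilon_1\epsilon_2$ for every $\xi'\in\Omega$, uniformly in $(x,t)\in X\times T$.

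Choosing $\epsilon_2$ small enough (depending only on $\theta$, which fixes $\epsilon_1$) so that $\epsilon_1\epsilon_2$ is much smaller than $1$, Lemma \ref{5} applied with $\delta_1,\delta_2$ of sizes $\epsilon_1\epsilon_2$ and $1/2$ gives
$$\Bigl|\sum_{\xi'\in\Omega}a_{\xi'}b_{\xi'}\Bigr|\ge|\Omega|-|\Omega|\bigl(\tfrac12\cdot O(\epsilon_1\epsilon_2)+\tfrac32\cdot O(\epsilon_1\epsilon_2)+\ldots\bigr)\gtrsim|\Omega|.$$
Since $\Omega=R^{1-\sigma}\Z^n\cap B(0,\epsilon_1 R)$ contains $\sim R^{n\sigma}$ lattice points, this yields the desired lower bound $|e^{i\frac{t}{2\pi R}\Delta}u_0(x)|\gtrsim R^{n\sigma}$. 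The only non-routine step is the diophantine observation that $\ell|m|^2\in\Z$; all other estimates are Taylor expansion and counting.
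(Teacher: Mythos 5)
Your proof follows essentially the same route as the paper: decompose $e^{i\frac{t}{2\pi R}\Delta}u_0(x)$ into the product $a_{\xi'}b_{\xi'}$, control $b_{\xi'}$ via Lemma \ref{4}, align the phase $a_{\xi'}$ using the arithmetic compatibility of $X$, $T$, $\Omega$, and invoke Lemma \ref{5} together with $|\Omega|\sim R^{\sigma n}$; you merely make explicit the step the paper leaves as ``one easily checks,'' and you correctly isolate the exact cancellation $\frac{t}{R}|\xi'|^2=\ell|m|^2\in\Z$ as the structural reason the lattices were chosen that way. One small bookkeeping remark: in your last display the term $\delta_2\max|a_{\xi'}|=\frac12\cdot 1$ from Lemma \ref{5} is \emph{not} of size $O(\epsilon_1\epsilon_2)$, so the bound reads $|\sum a_{\xi'}b_{\xi'}|\ge |\Omega|\bigl(\frac12-O(\epsilon_1\epsilon_2)\bigr)$ rather than $|\Omega|\bigl(1-O(\epsilon_1\epsilon_2)\bigr)$, but this still yields $\gtrsim|\Omega|$ as required.
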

\begin{proof}
Note first that
$$e^{i\frac{t}{2\pi R}\Delta}u_0(x)=\sum_{\xi'\in\Omega}e^{2\pi i[x\cdot \xi'-\frac{t}{R}|\xi'|^2]}\int e^{2\pi i[(x-\frac{2t\xi'}{R})\cdot \xi-\frac{t}{R}|\xi|^2]}\theta(\xi)d\xi$$
One easily checks that for $(x,t)\in X\times T$ and $\xi'\in\Omega$ we have
$$x\cdot \xi'-\frac{t}{R}|\xi'|^2\in \Z+B(0,\epsilon_3),$$
where $\epsilon_3$ can be chosen as small as desired by choosing $\epsilon_2$ small enough. In particular, we can make sure that
$$|e^{2\pi i[x\cdot \xi'-\frac{t}{R}|\xi'|^2]}-1|<\min\{\frac1{100}, \frac{1}{100}\int|\theta|\}.$$
It suffices now to combine this with Lemma \ref{4} and Lemma \ref{5}, once we also note that $$|\Omega|\sim R^{\sigma n}.$$
\end{proof}
\medskip
Recall that $u_0$ depends on $R$, so we might as well write $u_0=u_{0,R}$.
Let now $u_{0, R}(x,t)=e^{it\Delta}u_{0,R}(x)$ and let
$$v_R(x,t)=\frac1{t^{n/2}}\bar{u}_R(\frac{x}{t},\frac1{t})e^{i\frac{|x|^2}{4t}}$$
be its pseudoconformal transformation. The proposition in the Appendix shows that $v_R$ solves the Schr\"odinger equation with some initial data that we call $v_{0,R}$.

We record the properties of $v_R$ in the following proposition.
\begin{proposition}
\label{11}
We have for each large enough $R$ such that $R^{\sigma}$ is an integer
\begin{equation}
|\{x\in B(0,R):\sup_{0<t\lesssim R}|v_R(x,t)|\gtrsim R^{\sigma n-\frac{n}2}\}|\gtrsim R^n
\end{equation}
\begin{equation}
\|v_{0,R}\|_2\sim R^{\frac{\sigma n}2}
\end{equation}
\begin{equation}
\frac{\|\sup_{0<t\lesssim R}|v_R(x,t)|\|_{L^2(B_R)}}{\|v_{0,R}\|_2}\gtrsim R^{\frac{\sigma n}2}
\end{equation}
\begin{equation}
\supp \;\widehat{v_{0,R}}\subset 4\pi (\supp \;u_{0,R})\subset \A_{4^{-n-2}\pi, 16\pi\sqrt{n}}.
\end{equation}
\end{proposition}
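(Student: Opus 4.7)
The plan is to deduce Proposition \ref{11} from Proposition \ref{40} by pulling everything back through the pseudoconformal change of variables, and then to invoke the elementary number-theoretic estimates of Section \ref{50} to verify that the enhanced set has large measure.

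For each $(x', t') \in X \times T$, set $(x, t) := (2\pi R x'/t',\, 2\pi R/t')$. Since $x' \in \A_{4^{-n-2}, 2\sqrt{n}}$ and $t' \in (4^{-n-1}, 1)$, both $t$ and $|x|$ are comparable to $R$, and the definition of $v_R$ gives
$$|v_R(x,t)| \;=\; t^{-n/2}\,|u_R(x', 1/t)| \;=\; t^{-n/2}\bigl|e^{i(t'/(2\pi R))\Delta} u_{0,R}(x')\bigr| \;\gtrsim\; R^{\sigma n - n/2}$$
by Proposition \ref{40}. Hence every point of the enhanced set
$$Y \;:=\; \Bigl(\bigcup_{t' \in T} \frac{2\pi R}{t'}\,X\Bigr) \cap B(0, CR)$$
satisfies $\sup_{0 < t \lesssim R} |v_R(x,t)| \gtrsim R^{\sigma n - n/2}$, so the first conclusion reduces to the lower bound $|Y| \gtrsim R^n$. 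Writing $X$ as the $(\epsilon_2/R)$-neighborhood of $R^{\sigma-1}\Z^n \cap \A_{4^{-n-2}, 2\sqrt{n}}$, the set $Y$ is a union of $O(1)$-balls centered at the points $2\pi R p / t' = 2\pi R^{1-\sigma} k/m$ for $k \in \Z^n$ and $m \in \Z$ in appropriate ranges; a count gives $\sim R^{(1-\sigma)n + 1 - 2\sigma} = R^{n+1-\sigma(n+2)}$ such centers, which exceeds $R^n$ precisely because $\sigma < 1/(n+2)$. The role of Section \ref{50}, a quantitative multi-dimensional Dirichlet-type statement in the spirit of Lemma \ref{8}, is to show that these centers are essentially $O(1)$-separated rather than heavily clustered; the assumption that $R^\sigma$ is an integer keeps all centers on a common fine lattice and is where the number-theoretic argument gains traction.

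The remaining assertions follow quickly. The $L^2$ invariance of the pseudoconformal transformation (recorded in the Appendix) combined with (\ref{24}) gives $\|v_{0,R}\|_2 \sim R^{\sigma n/2}$, and squaring the pointwise bound and integrating over $Y$ yields
$$\bigl\|\sup_{0 < t \lesssim R}|v_R(x,t)|\bigr\|_{L^2(B(0,CR))}^2 \;\gtrsim\; R^{2\sigma n - n}\,|Y| \;\gtrsim\; R^{2\sigma n},$$
so dividing by $\|v_{0,R}\|_2$ produces the desired lower bound $\gtrsim R^{\sigma n/2}$ on the ratio. For the Fourier support inclusion, the Appendix's explicit formula for $\widehat{v_{0,R}}$ exhibits it as a $(4\pi)$-dilation of $u_{0,R}$ modulated by a phase, so (\ref{41}) yields $\supp \widehat{v_{0,R}} \subset 4\pi\,\supp u_{0,R} \subset \A_{4^{-n-2}\pi,\, 16\pi\sqrt{n}}$.

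The main obstacle is clearly the lower bound $|Y| \gtrsim R^n$: it is the non-concentration statement that the dilates of $X$ by the various $1/t'$ do not pile up on each other, and it is where the whole strength of the hypothesis $\sigma < 1/(n+2)$ is enforced. Everything else is routine bookkeeping around the pseudoconformal identity.
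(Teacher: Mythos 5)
Your overall architecture is exactly the paper's: pull the lower bound of Proposition \ref{40} through the pseudoconformal change of variables $(x,t)=(2\pi R x'/t',\,2\pi R/t')$, reduce the first estimate to the measure bound $|Y|\gtrsim R^n$ (which is precisely \eqref{21} from Section \ref{50}), and then handle the remaining three assertions by routine applications of \eqref{24}, \eqref{25} and \eqref{41}. Those last three parts, and the reduction of the first part to \eqref{21}, are all correct and are what the paper's terse proof intends.

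The one place where your commentary goes astray is in the heuristic you offer for $|Y|\gtrsim R^n$. You count $\sim R^{(1-\sigma)n+1-2\sigma}=R^{n+1-\sigma(n+2)}$ candidate centers $2\pi R^{1-\sigma}k/m$ and then say Section \ref{50} shows these centers are ``essentially $O(1)$-separated rather than heavily clustered.'' That cannot be the mechanism: since $\sigma<\tfrac{1}{n+2}$, your own count exceeds $R^n$, while a collection of $O(1)$-separated points inside $B(0,R)$ has cardinality at most $O(R^n)$ --- so the centers must be heavily clustered (indeed the rationals $k/m$ repeat). The number theory of Section \ref{50} argues from the opposite direction: it is a \emph{covering} statement. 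Lemma \ref{7} (derived from the Dirichlet pigeonhole Lemma \ref{8}) shows that at least $\tfrac34$ of the points $y\in[0,1]^n$ admit some $p$ in the prescribed range with $\max_i\|p y_i\|\le N^{-1/n}$; after the measure-preserving substitution $y_i=\{R^\sigma x_i\}$ (this is where the integrality of $R^\sigma$ enters --- it makes the map measure preserving, not because it keeps centers on a lattice), this says most $x\in[0,1]^n$ are within $\epsilon_2/R$ of some admissible $R^{-\sigma}k/p$, which is exactly $|V_0|\ge\tfrac34$ and hence $|Y|\gtrsim R^n$. So the logical reduction you made is correct and the cited lemma is the right one, but a careful reader would trip on the ``separation'' explanation, which contradicts your own center count and misrepresents what the Dirichlet argument actually provides.

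One further small point: for the Fourier support assertion you gesture at a ``$(4\pi)$-dilation'' without pinning down the direction; since $\widehat{v_{0,R}}(y)=Cu_{0,R}(4\pi y)$ by \eqref{25}, the support relation one reads off literally is $\supp\widehat{v_{0,R}}=\tfrac1{4\pi}\supp u_{0,R}$, whereas the proposition's statement (and the containment in $\A_{4^{-n-2}\pi,16\pi\sqrt n}$, which is $4\pi$ times $\A_{4^{-n-3},4\sqrt n}$) uses the scaling $4\pi(\supp u_{0,R})$. Whichever normalization is actually meant, you should state explicitly which dilation is being applied rather than leave it ambiguous.
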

\begin{proof}
The first property follows from \eqref{20} and \eqref{21}. The second one follows from \eqref{24} and \eqref{25}. The third one is a consequence of the first two. The fourth one also follows from \eqref{25} and \eqref{41}.
\end{proof}
\bigskip

Let now $s<\frac{n}{2(n+2)}$. The proof of Theorem \ref{main1} for this $s$ will now immediately follow by choosing $\sigma<\frac1{n+2}$ such that $\frac{\sigma n}{2}>s$, and by using $v_k=v_{0, R_k}$ from Proposition \ref{11}, with $R_k^\sigma$ an integer that grows to infinity with $k$.

\section{Number theoretical considerations}
\label{50}
\begin{lemma}
\label{7}
For each  large enough real number $N$ there is $S=S_N\subset [0,1]^n$ with $|S|\ge \frac34$ such that for each $(y_1,\ldots,y_n)\in S$ there exists $p\in [4^{-n-1}N,N+2]$ satisfying
\begin{equation}
\label{9}
\max_{1\le i\le n}\|py_i\|\le \frac1{N^{\frac1n}}.
\end{equation}
\end{lemma}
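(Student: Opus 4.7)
The plan is to combine Lemma~\ref{8} with an elementary measure-counting argument. For each positive integer $q$ set
$$G_q := \{y \in [0,1]^n : \max_{1 \leq i \leq n} \|qy_i\| \leq N^{-1/n}\}.$$
When $N$ is large enough that $N^{-1/n} \leq 1/2$, a one-dimensional slicing shows that $|\{y_i \in [0,1] : \|qy_i\| \leq N^{-1/n}\}| = 2 N^{-1/n}$ for every integer $q\geq 1$ (the set near each of the $q+1$ rationals $k/q \in [0,1]$ contributes an interval of length $2N^{-1/n}/q$, up to halving at the two endpoints). Hence by Fubini
$$|G_q| = (2 N^{-1/n})^n = \frac{2^n}{N},$$
independently of $q$.

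Next I would apply Lemma~\ref{8} pointwise: every $y \in [0,1]^n$ lies in $G_q$ for at least one integer $q \in [1, N+2]$. I then set
$$S = S_N := \bigcup_{\substack{q \in \Z \\ 4^{-n-1} N \leq q \leq N+2}} G_q,$$
so that any $y \in [0,1]^n \setminus S$ must, by Lemma~\ref{8}, lie in $G_q$ for some integer $q \in [1, 4^{-n-1} N)$. A crude union bound then gives
$$|[0,1]^n \setminus S| \leq \sum_{q=1}^{\lfloor 4^{-n-1} N \rfloor} |G_q| \leq 4^{-n-1} N \cdot \frac{2^n}{N} = 2^{-n-2} \leq \frac{1}{8},$$
and hence $|S| \geq 7/8 \geq 3/4$, which is the required estimate.

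There is really no obstacle to this strategy; the whole mechanism is that $|G_q|$ is uniform in $q$, so the ``small $q$'' range -- containing at most $\approx 4^{-n-1} N$ integers -- contributes a geometrically small total measure of $2^{-n-2}$. The constant $4^{-n-1}$ in the statement of the lemma is calibrated precisely to leave a comfortable margin above the $3/4$ threshold, and could be replaced by any constant times $4^{-n}$ with only a trivial change in the bookkeeping.
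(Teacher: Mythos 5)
Your proof is correct and follows essentially the same route as the paper: both apply Lemma~\ref{8} and then control the bad set $\bigcup_{q < 4^{-n-1}N} G_q$ by a union bound. The only (inessential) difference is that you compute $|G_q| = 2^n/N$ exactly, while the paper uses the cruder bound $|A_p| < 4^n/N$; either way the small-$q$ contribution is at most $1/4$, giving $|S| \ge 3/4$.
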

\begin{proof}
Using Lemma \ref{8}, we know that \eqref{9} holds for each $(y_1,\ldots,y_n)\in [0,1]^n$, if we allow $p\in [1,N+2]$. We need an upper bound for those $(y_1,\ldots,y_n)$ corresponding to $1\le p\le 4^{-n-1}N$. For each $p$ define
$$A_p=\{(y_1,\ldots,y_n)\in [0,1]^n:\;\max_{1\le i\le n}\|py_i\|\le \frac1{N^{\frac1n}}\}=$$
$$=\bigcup_{0\le k_i\le p}\{(y_1,\ldots,y_n)\in [0,1]^n:\;\max_{1\le i\le n}|py_i-k_i|\le \frac1{N^{\frac1n}}\}.$$
The crude estimate
$$|A_p|\le (p+1)^n(\frac{2}{N^{\frac1n}p})^n<\frac{4^n}{N}$$
leads to
$$|\bigcup_{1\le p\le 4^{-n-1}N}A_p|\le \frac14.$$
\end{proof}
\begin{proposition}
Assume $R^{\sigma}$ is a large enough  integer.
Let $$\frac{XR}{T}=\{\frac{xR}{t}:x\in X, t\in T\}.$$
Then
\begin{equation}
\label{21}
|\frac{XR}{T}\cap B(0,R)|\gtrsim R^n.
\end{equation}
\end{proposition}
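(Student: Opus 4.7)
The plan is to rephrase $y\in\frac{XR}{T}$ as a simultaneous Diophantine approximation problem, apply Lemma~\ref{7}, and check that the numerology matches the standing assumption $\sigma<\frac{1}{n+2}$.

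First I would restrict attention to the shell $y\in\A_{R/2,R}\subset B(0,R)$, whose Lebesgue measure is already $\sim R^n$, and describe when such a $y$ lies in $\frac{XR}{T}$. Unpacking the definitions of $X$ and $T$, the condition $y\in\frac{XR}{t}$ with $t=mR^{2\sigma-1}\in T$ reduces to $yt/R\in\A_{4^{-n-2},2\sqrt{n}}$ together with the existence of $k\in\Z^n$ satisfying
$$|myR^{\sigma-1}-k|\le \epsilon_2/R^{\sigma}.$$
The annulus condition is automatic from the shell constraint, since $|y|\in(R/2,R)$ and $t\in(4^{-n-1},1)$ force $|yt/R|\in(2\cdot 4^{-n-2},1)\subset(4^{-n-2},2\sqrt{n})$. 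Setting $z:=yR^{\sigma-1}$, I am thus asking for an integer $m\in(4^{-n-1}R^{1-2\sigma},R^{1-2\sigma})$ and $k\in\Z^n$ with $|mz-k|\le \epsilon_2/R^{\sigma}$.

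Next I would push $y\mapsto z\bmod 1$ through the shell and apply Lemma~\ref{7}. Since $|z|\in(R^{\sigma}/2,R^{\sigma})$ and $R^{\sigma}\to\infty$, a standard unit-cube tiling argument gives that for any Borel $E\subset[0,1]^n$ the preimage $\{y\in\A_{R/2,R}:yR^{\sigma-1}\bmod 1\in E\}$ has measure at least $(|E|-o(1))\,|\A_{R/2,R}|$. Taking $E=S_N$ from Lemma~\ref{7} with $N:=R^{1-2\sigma}$ produces a set $G\subset\A_{R/2,R}$ of measure $\gtrsim R^n$ and, for each $y\in G$, an integer $m\in(4^{-n-1}N,N+2]$ with $\max_i\|mz_i\|\le N^{-1/n}$, so that $|mz-k|\le \sqrt{n}\,N^{-1/n}$ for the nearest $k\in\Z^n$; the $O(1)$ values of $m$ for which $mR^{2\sigma-1}>1$ contribute a negligible $O(R^{-(1-2\sigma)})=o(1)$ measure which can be absorbed above. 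The key numerical check is $\sqrt{n}\,N^{-1/n}\le \epsilon_2/R^{\sigma}$, which rearranges to $R^{((n+2)\sigma-1)/n}\lesssim\epsilon_2$ and holds for all large $R$ precisely because $\sigma<\frac{1}{n+2}$. With this in hand, $t=mR^{2\sigma-1}\in T$, the Dirichlet condition is satisfied, and the annulus condition on $yt/R$ follows from the shell restriction, so $y\in\frac{XR}{t}\subset\frac{XR}{T}\cap B(0,R)$ for every $y\in G$; this yields $|\frac{XR}{T}\cap B(0,R)|\ge|G|\gtrsim R^n$.

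The main obstacle, as I see it, is the bookkeeping among four scales---the lattice spacing $R^{\sigma-1}$ in $X$, the ball radius $\epsilon_2/R$, the spacing $R^{2\sigma-1}$ in $T$, and the Dirichlet gain $N^{-1/n}$---and the careful alignment of the $p$-range produced by Lemma~\ref{7} with the valid $m$-range for $T$. Conceptually, however, the argument is transparent: the threshold $\sigma<\frac{1}{n+2}$ arises naturally from equating $N^{-1/n}=R^{-(1-2\sigma)/n}$ with the required accuracy $\epsilon_2/R^{\sigma}$ when $N=R^{1-2\sigma}$.
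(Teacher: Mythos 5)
Your argument is correct, and while it rests on the same core mechanism as the paper---rephrase membership in $\frac{XR}{T}$ as a Diophantine approximation condition, apply Lemma~\ref{7} with $N\sim R^{1-2\sigma}$, and verify that the accuracy $N^{-1/n}$ beats the required accuracy $\epsilon_2 R^{-\sigma}$ exactly when $\sigma<\frac1{n+2}$---you diverge from the paper in two structural choices that are worth noting.

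First, the paper passes to $\frac{X}{T}\cap[0,1]^n$ and then has to deal with the annulus constraint in the definition of $X$ by introducing the auxiliary sets $V$ (dropping the constraint) and $W$ (the bad set with $|k|$ too small), showing $|V|\ge\frac34$ and $|W|<\frac14$. You instead stay at scale $R$ and restrict $y$ to the shell $\A_{R/2,R}$, observing that there the annulus membership $yt/R\in\A_{4^{-n-2},2\sqrt n}$ is automatic for every $t\in T$; this cleanly eliminates the $V/W$ bookkeeping. Second, the paper converts Lemma~\ref{7} into a statement about $[0,1]^n$ via the fact that $x\mapsto\{R^\sigma x\}$ is measure preserving, which is where the hypothesis ``$R^\sigma$ is an integer'' enters. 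Your unit-cube tiling argument on the annulus $\A_{R^\sigma/2,R^\sigma}$ (fully contained cubes each contribute $|E|$; boundary cubes are an $O(R^{-\sigma})$ fraction) makes no use of that integrality, so your version of the proposition does not actually need $R^\sigma\in\Z$---a modest strengthening. Two small points to tighten: Lemma~\ref{7} produces $p\in[4^{-n-1}N,N+2]$, a closed interval at the left endpoint (you wrote an open one), and the dismissal of the $O(1)$ values $m>R^{1-2\sigma}$ deserves one explicit line---say, that those $y$ lie in $\bigcup_m A_m$ over at most two integers $m$, each $A_m$ of measure $O(1/N)$, so the excess has relative measure $o(1)$; this is exactly the issue the paper sidesteps by taking $N=R^{1-2\sigma}-2$.
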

\begin{proof}
It suffices to prove  that
$$|\frac{X}{T}\cap [0,1]^n|\ge \frac12.$$
This can be written as $|U|\ge \frac12$ where
$$U=\{x\in[0,1]^n:\max_{1\le i\le n}|x_i-R^{-\sigma}\frac{k_i}{p}|\le \frac{\epsilon_2}R,\text{ with }$$$$4^{-n-2}R^{1-\sigma}\le |k|\le 2\sqrt{n}R^{1-\sigma},\;4^{-n-1}R^{1-2\sigma}\le p\le R^{1-2\sigma} \}.$$
This will follow if we prove that the larger set (we have dropped the restriction on $k_i$)
$$V=\{x\in[0,1]^n:\max_{1\le i\le n}|x_i-R^{-\sigma}\frac{k_i}{p}|\le \frac{\epsilon_2}R,\text{ with }4^{-n-1}R^{1-2\sigma}\le p\le R^{1-2\sigma} \}$$
satisfies
$|V|\ge \frac34.$ Indeed, note first that the restriction
$$|k|\le 2\sqrt{n}R^{1-\sigma}$$is redundant, as the inequality $\max|k_i|\le R^{1-\sigma}+1\le 2R^{1-\sigma}$ is forced by the combination of $x\in[0,1]^n$, $\max_{1\le i\le n}|x_i-R^{-\sigma}\frac{k_i}{p}|\le \frac{\epsilon_2}R$ and $p\le R^{1-2\sigma}$.
Second, note that
$$W=\{x\in[0,1]^n:\max_{1\le i\le n}|x_i-R^{-\sigma}\frac{k_i}{p}|\le \frac{\epsilon_2}R,\text{ with }$$$$|k|\le 4^{-n-2}R^{1-\sigma},\;4^{-n-1}R^{1-2\sigma}\le p\le R^{1-2\sigma} \}$$
satisfies  $W\subset[0,\frac12]^n$ and thus $|W|<\frac14$.
\medskip

We next focus on showing that $|V|\ge \frac34$. The inequality $$\max_{1\le i\le n}|x_i-R^{-\sigma}\frac{k_i}{p}|\le \frac{\epsilon_2}R$$
can be written as
$$\max_{1\le i\le n}\|p\{R^{\sigma}x_i\}\|\le \frac{\epsilon_2pR^{\sigma}}R,$$
where $\{z\}$ is the fractional part of $z$. Note that given the lower bound  $p\ge 4^{-n-1}R^{1-2\sigma}$ and that $\sigma<\frac1{n+2}$, for $R$ large enough we have
$$\frac{\epsilon_2pR^{\sigma}}R\ge \frac1{(R^{1-2\sigma}-2)^{\frac1n}}.$$
Let $N:=R^{1-2\sigma}-2$. Then $V_0\subset V$ where
$$V_0=\{x\in[0,1]^n:\max_{1\le i\le n}\|p\{R^{\sigma}x_i\}\|\le \frac1{N^{\frac1n}},\text{ for some }4^{-n-1}N\le p\le N+2 \}.$$
Since $R^{\sigma}$ is an integer, the map
$$(x_1,\ldots,x_n)\mapsto (\{R^{\sigma}x_1\},\ldots, \{R^{\sigma}x_n\})$$
is a measure preserving transformation on $[0,1]^n$. The fact that $|V_0|\ge \frac34$ now follows from Lemma \ref{7}.
\end{proof}

\section{Appendix}
We record below the following classical result. See for example \cite{Ta}, page 72.
\begin{proposition}
If $u(x,t)$ solves \eqref{e2} then so does its pseudoconformal transformation
$$v(x,t)=\frac1{t^{n/2}}\bar{u}(\frac{x}{t},\frac1{t})e^{i\frac{|x|^2}{4t}}.$$
Moreover, the initial data $u_0(x)=u(x,0)$, $v_0(x)=v(x,t)$ are related by the formula
\begin{equation}
\label{25}
\widehat{v_0}(y)=Cu_0(4\pi y).
\end{equation}
In particular, $$\|u_0\|_2\sim\|v_0\|_2.$$
\end{proposition}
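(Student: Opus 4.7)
The plan is to verify two assertions: (i) the ansatz $v(x,t) = t^{-n/2}\bar u(x/t,1/t)e^{i|x|^2/(4t)}$ satisfies $i\partial_t v + \Delta v = 0$ whenever $u$ does, and (ii) the Fourier identity \eqref{25}, from which the $L^2$ equivalence will follow by Plancherel.

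For (i), I would apply $i\partial_t + \Delta$ to $v$ via the chain rule. The prefactor $t^{-n/2}e^{i|x|^2/(4t)}$ is, up to a constant, the free Schr\"odinger fundamental solution, hence is itself annihilated by $i\partial_t + \Delta$. The remaining cross terms between the prefactor and $\bar u(x/t,1/t)$ are arranged so that after collection one obtains $t^{-n/2-2}e^{i|x|^2/(4t)}$ times $(-i\partial_{t'} + \Delta_{x'})\bar u$ evaluated at $(x/t,1/t)$, which vanishes because $\bar u$ satisfies the conjugate Schr\"odinger equation. The only delicate bookkeeping is matching the cross terms from $\nabla e^{i|x|^2/(4t)}\cdot \nabla[\bar u(x/t,1/t)]$ against those coming from $\partial_t$ hitting $1/t$ in the second slot of $\bar u$, and this matching is exactly what the phase $e^{i|x|^2/(4t)}$ is designed for.

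For (ii), I would use the explicit propagator $u(x',t') = (4\pi i t')^{-n/2}\int e^{i|x'-y|^2/(4t')}u_0(y)\,dy$. Setting $x'=x/t$, $t'=1/t$ and expanding
\begin{equation*}
\frac{|x/t - y|^2}{4/t} = \frac{|x|^2}{4t} - \frac{x\cdot y}{2} + \frac{t|y|^2}{4},
\end{equation*}
one sees that after conjugating, the phase $e^{-i|x|^2/(4t)}$ cancels the prefactor $e^{i|x|^2/(4t)}$ and the $t^{\pm n/2}$ factors also cancel, leaving
\begin{equation*}
v(x,t) = C\int e^{ix\cdot y/2}\,e^{-it|y|^2/4}\,\overline{u_0(y)}\,dy,
\end{equation*}
whose $t\to 0^+$ limit identifies $v_0(x) = C\,\overline{\widehat{u_0}(x/(4\pi))}$ under the paper's Fourier convention. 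Taking the Fourier transform of both sides and using the standard identities $\widehat{\bar f}(y) = \overline{\hat f(-y)}$, $\widehat{\widehat f}(y) = f(-y)$ together with the dilation rule yields $\widehat{v_0}(y) = C u_0(4\pi y)$ as stated (the conjugation being absorbed into $C$). Plancherel and the dilation rule then give $\|v_0\|_2 \sim \|u_0\|_2$ at once.

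There is no conceptual obstacle; the only care needed is tracking constants and conjugates. The substantive algebraic fact driving everything is the completion of squares displayed above, which is precisely why $e^{i|x|^2/(4t)}$ is the correct twisting phase in the definition of the pseudoconformal transformation.
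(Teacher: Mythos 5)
The paper offers no proof of this proposition---it simply cites Tao's textbook---so there is nothing in the source to compare against line by line. Your argument is essentially the standard one, and I have checked both halves: the product rule computation for part (i) does organize exactly as you say, since $(i\partial_t+\Delta)\bigl[t^{-n/2}e^{i|x|^2/(4t)}\bigr]=0$ and the cross terms $2\nabla\phi\cdot\nabla\bar u$ precisely cancel the $\partial_t(1/t)$ contribution, leaving $t^{-n/2-2}e^{i|x|^2/(4t)}\,\overline{(i\partial_{t'}+\Delta_{x'})u}=0$; and the completing-the-square identity in part (ii) is correct.

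One small but genuine imprecision: your derivation actually yields $\widehat{v_0}(y)=C\,\overline{u_0(4\pi y)}$, and the parenthetical remark ``the conjugation being absorbed into $C$'' is not legitimate, since the conjugation acts on the function $u_0$ rather than on a scalar. This is not a defect of your reasoning---in fact it suggests that equation \eqref{25} as printed is missing a bar over $u_0$---and it is immaterial for every use the paper makes of \eqref{25} (the support statement $\supp\widehat{v_{0}}\subset 4\pi\,\supp u_0$ and the $L^2$ comparison $\|v_0\|_2\sim\|u_0\|_2$ are both insensitive to conjugation). You should simply state $\widehat{v_0}(y)=C\,\overline{u_0(4\pi y)}$ and note that the conjugation is harmless for what follows, rather than claim it can be hidden in the constant.
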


\end{document}